\newtheorem{theorem}{Theorem}[section]
\theoremstyle{plain}
\newtheorem{corollary}{Corollary}
\newtheorem{lemma}{Lemma}[section]
\newtheorem{remark}{Remark}[section]
\numberwithin{equation}{section}
\begin{document}
\title[Sharp bounds for the second Seiffert mean]{Sharp bounds for the
second Seiffert mean in terms of power means }
\author{Zhen-Hang Yang}
\address{System Division, Zhejiang Province Electric Power Test and Research
Institute, Hangzhou, Zhejiang, China, 31001}
\email{yzhkm@163.com}
\date{March 17, 2012}
\subjclass[2010]{Primary 26E60, 26D05, ; Secondary 26A48}
\keywords{The second Seiffert mean, power mean, sharp bound}
\thanks{This paper is in final form and no version of it will be submitted
for publication elsewhere.}

\begin{abstract}
For $a,b>0$ with $a\neq b$, let $T\left( a,b\right) $ denote the second
Seiffert mean defined by 
\begin{equation*}
T\left( a,b\right) =\frac{a-b}{2\arctan \frac{a-b}{a+b}}
\end{equation*}%
and $A_{r}\left( a,b\right) $ denote the $r$-order power mean. We present
the sharp bounds for the second Seiffert mean in terms of power means: 
\begin{equation*}
A_{p_{1}}\left( a,b\right) <T\left( a,b\right) \leq A_{p_{2}}\left(
a,b\right) ,
\end{equation*}%
where $p_{1}=$ $\log _{\pi /2}2$ and $p_{2}=5/3$ can not be improved.
\end{abstract}

\maketitle

\section{Introduction}

Throughout the paper, we assume that $a,b>0$ with $a\neq b$. The power mean
of order $r$ of the positive real numbers $a$ and $b$ is defined by

\begin{equation*}
A_{r}=A_{r}(a,b)=\left( \frac{a^{r}+b^{r}}{2}\right) ^{1/r}\text{ if }r\neq 0%
\text{ and }A_{0}=A_{0}(a,b)=\sqrt{ab}.
\end{equation*}%
It is well-known that the function $r\mapsto A_{r}(a,b)$ is continuous and
strictly increasing on $\mathbb{R}$ (see \cite{Bullen.1988}). As special
cases, the arithmetic mean, geometric mean and quadratic mean are $A=A\left(
a,b\right) =A_{1}\left( a,b\right) $, $G=G\left( a,b\right) =A_{0}\left(
a,b\right) $ and $Q=Q\left( a,b\right) =A_{2}\left( a,b\right) $,
respectively.

The Lehmer mean of order $r$ of the positive real numbers $a$ and $b$ is
defined as 
\begin{equation*}
L_{r}=L_{r}\left( a,b\right) =\frac{a^{r+1}+b^{r+1}}{a^{r}+b^{r}}
\end{equation*}%
(see \cite{Lehmer.36(1971)}). It is seen that the function $r\mapsto
L_{r}(a,b)$ is continuous and strictly increasing on $\mathbb{R}$. In
particular, $L_{0}=A$, $L_{1}=C$ are the arithmetic mean, contra-harmonic
mean, respectively. Clearly, Lehmer mean can be expressed by power means as $%
L_{r}=A_{r+1}^{r+1}A_{r}^{-r}$.

The first Seiffert mean \cite{Seiffert.42(1987)} is defined by%
\begin{equation*}
P=P\left( a,b\right) =\frac{a-b}{2\arcsin \frac{a-b}{a+b}}.
\end{equation*}%
Many remarkable inequalities for $P$ can be found in the literature \cite%
{Jagers.12(1994)}, \cite{Sandor.76(2001)}, \cite{Hasto.3(5)(2002)}, \cite%
{Sandor.2003(16)(2003)}, \cite{Neuman.17(1)(2006)}, \cite{Chu.AAA.2010}, 
\cite{He.17(4)(2010)}, \cite{Wang.4(21-24)(2010)}, \cite{Wang.4(4)(2010)},%
\cite{Chu.JIA.2010}, \cite{Liu.JIA.2011}. Here we mention that the following
sharp bounds for the first Seiffert mean $P$ in terms of power means proved
by Jagers \cite{Jagers.12(1994)} and H\"{a}st\"{o} \cite{Hasto.7(1)(2004)}:%
\begin{equation}
A_{\log _{\pi }2}\left( a,b\right) <P\left( a,b\right) <A_{2/3}\left(
a,b\right) .  \label{H-J}
\end{equation}

In 1995, Seiffert \cite{Seiffert.29(1995)} defined his second mean as%
\begin{equation*}
T=T\left( a,b\right) =\frac{a-b}{2\arctan \frac{a-b}{a+b}}
\end{equation*}%
and proved that 
\begin{equation}
A<T<Q.  \label{Seiffert}
\end{equation}

S\'{a}ndor \cite[pp. 265-267]{Sandor.2009} showed that by a transformation
of arguments, the mean $T$ can be reduced to the mean $P$: 
\begin{equation*}
T(a,b)=P(x,y),
\end{equation*}%
where 
\begin{equation}
x=\frac{\sqrt{2\left( a^{2}+b^{2}\right) }+a-b}{2}\text{, \ \ \ }y=\frac{%
\sqrt{2\left( a^{2}+b^{2}\right) }-a+b}{2},  \label{Sandor transf.}
\end{equation}%
which implies%
\begin{equation*}
A\left( x,y\right) =Q\left( a,b\right) \text{, \ \ \ }G\left( x,y\right)
=A\left( a,b\right) .
\end{equation*}%
Therefore, by using the transformations (\ref{Sandor transf.}), the
following transformations of means will be true:

\begin{equation*}
G\rightarrow A\text{, \ \ \ }A\rightarrow Q\text{, \ \ \ }P\rightarrow T.
\end{equation*}%
Thus, from the known inequalities involving $P$, $A$, $G$ he easily obtained
corresponding ones involving $T$, $Q$, $A$, for example, (\ref{Seiffert})
and the following inequalities:%
\begin{equation}
Q^{2/3}A^{1/3}<Q^{1/3}\left( \frac{Q+A}{2}\right) ^{2/3}<T<\frac{2Q+A}{3}.
\label{Sandor}
\end{equation}

Recently, Chu et al. in \cite{Chu.JIA.2011} proved the double inequality 
\begin{equation}
p_{1}Q+\left( 1-p_{1}\right) A<T<q_{1}Q+\left( 1-q_{1}\right) A  \label{Chu1}
\end{equation}%
holds if and only if $p_{1}\leq \left( \sqrt{2}+1\right) \left( 4-\pi
\right) /\pi ,q_{1}\geq 2/3$, which shows that the constant $2/3$ of the
third inequality in (\ref{Sandor}) is the best.

Very recently, Witkowski \cite{Witkowski.MIA.2012.inprint} used some
geometric ideas to prove a series of inequalities involving $T$, $Q$, $A$,
such as 
\begin{eqnarray}
A &<&T<\frac{4}{\pi }A,  \label{W1} \\
\frac{2\sqrt{2}}{\pi }Q &<&T<Q,  \label{W2} \\
(1-r_{1})Q+r_{1}A &<&T<\frac{2Q+A}{3},  \label{W3}
\end{eqnarray}%
where $r_{1}=\frac{2\left( \pi -2\sqrt{2}\right) }{\left( 2-\sqrt{2}\right)
\pi }=\allowbreak 0.340341385...$. It is obvious that (\ref{W3}) is actually
(\ref{Chu1}).

In 2010, Wang et al. \cite{Wang.4(4)(2010)} presented the optimal upper and
lower Lehmer mean bounds for $T$ as follows: 
\begin{equation}
L_{0}<T<L_{1/3}.  \label{Wang}
\end{equation}

In \cite{Chu.AAA.2012.inprint}, Chu et al. demonstrated that the double
inequality%
\begin{equation}
C\left( p_{2}a+\left( 1-p_{2}\right) b,p_{2}b+\left( 1-p_{2}\right) a\right)
<T\left( a,b\right) <C\left( q_{2}a+\left( 1-q_{2}\right) b,q_{2}b+\left(
1-q_{2}\right) a\right)   \label{Chu2}
\end{equation}%
if and only if $p_{2}\leq \left( 1+\sqrt{4/\pi -1}\right) /2$, $q_{2}\geq
\left( 3+\sqrt{3}\right) /6$.

It is interesting and useful to evaluate the second Seiffert mean $T$ by
power means $A_{p}$. Until recently, the inequalities (\ref{Seiffert}) has
improved by Constin and Toader \cite{Costin.IJMMS.2012.inprint} as 
\begin{equation}
N<A_{3/2}<T<Q,  \label{C-S}
\end{equation}%
where $N$ is the Neuman-S\'{a}ndor mean defined in \cite{Neuman.17(1)(2006)}
by%
\begin{equation*}
N=N\left( a,b\right) =\frac{a-b}{2\func{arcsinh}\frac{a-b}{a+b}}.
\end{equation*}%
Up to now, this may be the best result for the bounds for the second
Seiffert mean in terms of power means. For this reason, we are going to find
the best $p\in \left( 3/2,2\right) $ such that the inequality%
\begin{equation}
T\left( a,b\right) <A_{p}\left( a,b\right)   \label{m}
\end{equation}%
or its reverse inequality holds in this paper.

Our main results are the following

\begin{theorem}
\label{Theorem 1}The inequality (\ref{m}) if and only if $p\geq p_{2}=5/3$.
Moreover, we have%
\begin{equation}
\alpha _{1}A_{5/3}\left( a,b\right) <T\left( a,b\right) <\alpha
_{2}A_{5/3}\left( a,b\right) ,  \label{ma}
\end{equation}%
where $\alpha _{1}=2^{8/5}\pi ^{-1}=\allowbreak 0.964\,94...$ and $\alpha
_{2}=1$ are the best possible constants.
\end{theorem}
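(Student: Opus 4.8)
The plan is to reduce everything to a single-variable inequality by homogeneity. Since $T$ and $A_{5/3}$ are both homogeneous of degree one and symmetric, I would set $b=1$ and write $a=\frac{1+t}{1-t}$ with $t=\frac{a-b}{a+b}\in(0,1)$, so that $A(a,b)=\frac{1}{1-t}$ and $\arctan t$ appears directly in $T$. Concretely, $T(a,b)=\frac{a-b}{2\arctan t}=\frac{t/(1-t)}{\arctan t}$, while $A_{5/3}(a,b)=\left(\frac{a^{5/3}+b^{5/3}}{2}\right)^{3/5}=\frac{1}{1-t}\left(\frac{(1+t)^{5/3}+(1-t)^{5/3}}{2}\right)^{3/5}$. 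The inequality $T<A_{5/3}$ is then equivalent, after cancelling $\frac{1}{1-t}$, to
\begin{equation*}
f(t):=\left(\frac{(1+t)^{5/3}+(1-t)^{5/3}}{2}\right)^{3/5}\arctan t - t > 0,\qquad t\in(0,1).
\end{equation*}
So the first step is to establish $f(t)>0$ on $(0,1)$ with $f(0^+)=0$; this gives the sharp upper bound $T<A_{5/3}$ and $\alpha_2=1$.

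For the core inequality I would differentiate. Writing $g(t)=\left(\frac{(1+t)^{5/3}+(1-t)^{5/3}}{2}\right)^{3/5}$, one computes $g'(t)$ and hence $f'(t)=g'(t)\arctan t+\frac{g(t)}{1+t^2}-1$. Rather than fight the fractional exponents head-on, the cleaner route is to use the power-series expansion of $g(t)$ (an even function, so $g(t)=1+c_2t^2+c_4t^4+\cdots$) together with the series $\arctan t=t-t^3/3+t^5/5-\cdots$, show that the Maclaurin coefficients of $g(t)\arctan t - t$ are eventually of one sign, and treat the finitely many initial terms by hand or by a monotonicity argument on $[0,\delta]$ and a numerical/convexity argument on $[\delta,1)$. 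Equivalently — and this is probably the slickest — I would prove that $h(t):=\dfrac{t}{\arctan t}$ satisfies $h(t)<g(t)^{5/3}\cdot\big(g(t)^{5/3}\big)^{?}$... more to the point, raise the target inequality to the power $5/3$ to kill the outer exponent: $T<A_{5/3}\iff \left(\frac{t}{(1-t)\arctan t}\right)^{5/3}<\frac{1}{(1-t)^{5/3}}\cdot\frac{(1+t)^{5/3}+(1-t)^{5/3}}{2}$, i.e. $\left(\frac{t}{\arctan t}\right)^{5/3}<\frac{(1+t)^{5/3}+(1-t)^{5/3}}{2}$. Now both sides are manifestly smooth; set $u=\left(\frac{t}{\arctan t}\right)$ and use the known sharp bounds for $t/\arctan t$, or expand both sides in $t^2$ and compare coefficient by coefficient. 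I expect the inequality $\left(\frac{t}{\arctan t}\right)^{5/3}<\frac{(1+t)^{5/3}+(1-t)^{5/3}}{2}$ to hold on $(0,1)$ and to fail for exponents slightly below $5/3$, which is exactly the sharpness.

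For the second half — that $T<A_p$ forces $p\geq 5/3$, and the endpoint behaviour — I would examine $t\to 0^+$. Expanding, $\frac{t}{\arctan t}=1+\frac{t^2}{3}-\frac{4t^4}{45}+\cdots$ and $\left(\frac{(1+t)^p+(1-t)^p}{2}\right)^{1/p}=1+\frac{p-1}{2}t^2+\cdots$; for $p=2$ the coefficient $\frac{p-1}{2}=\frac12$ dominates $\frac13$, consistent with $T<A_2$, but to locate the threshold one must go to the next order. Matching the $t^2$-coefficients gives $p-1=2/3$, i.e. $p=5/3$, as the value at which the two means agree to second order; for $p<5/3$ the inequality $T<A_p$ then fails near $t=0$, which shows $p\ge 5/3$ is necessary. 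Combined with the monotonicity $r\mapsto A_r$ (quoted from \cite{Bullen.1988}), $T<A_{5/3}$ then yields $T<A_p$ for all $p\ge 5/3$. Finally, the lower bound $\alpha_1 A_{5/3}<T$ with $\alpha_1=2^{8/5}/\pi$: here I would show that $\frac{T(a,b)}{A_{5/3}(a,b)}=\frac{f(t)+t}{t}\cdot$(something) — more precisely the ratio $\frac{t}{(\arctan t)\,g(t)}$ — is monotone in $t$ on $(0,1)$, so its infimum is attained in the limit $t\to 1^-$ (the case $b\to 0$), where $\arctan 1=\pi/4$ and $g(1)=\left(\frac{2^{5/3}}{2}\right)^{3/5}=2^{2/5}$, giving ratio $\frac{1}{(\pi/4)2^{2/5}}=\frac{4}{\pi 2^{2/5}}=\frac{2^2}{\pi 2^{2/5}}=\frac{2^{8/5}}{\pi}=\alpha_1$; the supremum, attained at $t\to 0^+$, is $1=\alpha_2$. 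The constant $\alpha_1$ is best possible precisely because the ratio is strictly monotone and the boundary value is a genuine limit.

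The main obstacle will be the monotonicity claim for the ratio $\dfrac{t}{(\arctan t)\,g(t)}$ on $(0,1)$ (equivalently, the sign of the derivative of its logarithm, $\frac{1}{t}-\frac{1}{(1+t^2)\arctan t}-\frac{g'(t)}{g(t)}$). Clearing denominators turns this into showing positivity of an expression mixing $\arctan t$ with the algebraic function $g$ and its derivative; I anticipate handling it by expanding in powers of $t$ and proving the resulting power series has all-positive (or all-negative) coefficients past a small index, with the low-order terms checked directly — the same technique used for $f(t)>0$ above. This "sign of Maclaurin coefficients" step, including verifying the finitely many exceptional low-order coefficients, is the technical heart of the argument.
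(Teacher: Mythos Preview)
Your overall strategy matches the paper's exactly: reduce to one variable by homogeneity, prove that the ratio $T/A_{5/3}$ is strictly monotone on the interval, and read off both sharp constants $\alpha_1,\alpha_2$ from the two endpoint limits; necessity of $p\ge 5/3$ comes from the local expansion near equal arguments. The paper works with $x=b/a\in(0,1)$ where you use $t=(a-b)/(a+b)$, but these parametrisations are equivalent via $t=(1-x)/(1+x)$, and your $t^2$-coefficient matching is precisely the content of the paper's Lemma~\ref{Lemma 2.1}.

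The substantive difference is in how the monotonicity of the ratio is established --- the step you correctly identify as ``the technical heart'' but do not actually carry out. The paper does \emph{not} use power series or coefficient comparison. Instead, setting $F_p(x)=\ln T(1,x)-\ln A_p(1,x)$, one differentiation reduces $\operatorname{sgn}F_p'$ to $\operatorname{sgn}f_1(x)$ where
\[
f_1(x)=\frac{(1-x)(x^{p}+1)}{(x^{2}+1)(x^{p-1}+1)}-\arctan\frac{1-x}{1+x},
\]
and a second differentiation reduces $\operatorname{sgn}f_1'$ to the sign of an explicit algebraic expression $f_2(x)$. For $p=5/3$ the key observation is that $3x^{4/3}f_2(x)$ becomes a polynomial in $x^{1/3}$ which factors completely as
\[
2\bigl(1-x^{1/3}\bigr)^{3}\bigl(x^{2/3}+1\bigr)\bigl(x^{4/3}+3x+5x^{2/3}+3x^{1/3}+1\bigr)>0.
\]
Hence $f_1$ is decreasing, $f_1(x)>f_1(1)=0$, and $F_{5/3}$ is strictly increasing --- finished in two derivatives and one factorisation. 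Your proposed route via Maclaurin coefficients of $(1\pm t)^{5/3}$ mixed with the $\arctan$ series may ultimately succeed, but ``all coefficients of one sign past a small index'' is asserted without evidence, and the fractional binomial coefficients make this far from automatic; the paper's factorisation avoids the issue entirely. As written, your proposal has a genuine gap at exactly this point.
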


\begin{theorem}
\label{Theorem 2}The inequality (\ref{m}) is reversed if and only if $p\leq
p_{1}=\log _{\pi /2}2=\allowbreak 1.\,\allowbreak 534\,9...$. Moreover, we
have%
\begin{equation}
\beta _{1}A_{\log _{\pi /2}2}\left( a,b\right) <T\left( a,b\right) <\beta
_{2}A_{\log _{\pi /2}2}\left( a,b\right) ,  \label{mb}
\end{equation}%
where $\beta _{1}=1$ and $\beta _{2}=1\allowbreak .\,\allowbreak 013\,6...$
are the best possible constants.
\end{theorem}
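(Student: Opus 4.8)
\noindent\textbf{Proof proposal for Theorem~\ref{Theorem 2}.} Since $T$ and every $A_{p}$ are symmetric and positively homogeneous of degree one, I may assume $a>b$ and normalise $a+b=2$, writing $a=1+t$, $b=1-t$ with $t\in(0,1)$; then
\[
T(a,b)=\frac{t}{\arctan t},\qquad A_{p}(a,b)=\left(\frac{(1+t)^{p}+(1-t)^{p}}{2}\right)^{1/p},
\]
so the whole statement concerns the one-variable function
\[
F(t)=\ln\frac{t}{\arctan t}-\frac1p\ln\frac{(1+t)^{p}+(1-t)^{p}}{2},\qquad t\in(0,1).
\]
Because $r\mapsto A_{r}$ is strictly increasing, the reversed inequality $T>A_{p}$ holds for all $a\neq b$ as soon as it holds for the largest admissible $p$, while if it fails for some $p$ it fails for all larger $p$. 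Hence the theorem reduces to: (i) when $p>p_{1}$, $F(t)<0$ for $t$ near $1$, so $T>A_{p}$ fails; (ii) when $p=p_{1}=\log_{\pi/2}2$, $F>0$ on $(0,1)$; (iii) at $p=p_{1}$, $F(0^{+})=F(1^{-})=0$, which forces $\beta_{1}=1$ and $\beta_{2}=e^{\max_{[0,1]}F}$ and shows both are best possible.

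Parts (i) and (iii) are short. As $t\to1^{-}$ we get $\arctan t\to\pi/4$ and $(1+t)^{p}+(1-t)^{p}\to2^{p}$, so $T\to4/\pi$ and $A_{p}\to2^{1-1/p}$; thus $T>A_{p}$ can hold near $t=1$ only if $4/\pi\ge 2^{1-1/p}$, i.e. $(1-1/p)\ln2\le 2\ln2-\ln\pi$, i.e. $p\le\ln2/\ln(\pi/2)=p_{1}$, which is (i). Expanding $\arctan t=t-t^{3}/3+\cdots$ and $(1+t)^{p}+(1-t)^{p}=2+p(p-1)t^{2}+\cdots$ yields $F(t)=\bigl(\tfrac13-\tfrac{p-1}{2}\bigr)t^{2}+O(t^{4})$, so $F(0^{+})=0$, and $\tfrac13-\tfrac{p_{1}-1}{2}>0$ makes $F$ positive just to the right of $0$; at $p=p_{1}$ the identity $2^{1-1/p_{1}}=4/\pi$ also gives $F(1^{-})=0$.

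The substance is part (ii). Differentiating,
\[
F'(t)=\frac1t-\frac1{(1+t^{2})\arctan t}-\frac{(1+t)^{p-1}-(1-t)^{p-1}}{(1+t)^{p}+(1-t)^{p}}.
\]
The plan is to show $F'$ has exactly one zero $t_{0}\in(0,1)$, with $F'>0$ on $(0,t_{0})$ and $F'<0$ on $(t_{0},1)$; together with $F(0^{+})=F(1^{-})=0$ this gives $F>0$ on $(0,1)$ and pins down $\max_{[0,1]}F=F(t_{0})$. To make the sign analysis manageable I would substitute $t=\tan\theta$, $\theta\in(0,\pi/4)$, and put $\phi=\pi/4-\theta$; then $\arctan t=\pi/4-\phi$ and $1\pm t$ become $\sqrt2\cos\phi/\cos\theta$ and $\sqrt2\sin\phi/\cos\theta$, so $T>A_{p}$ takes the symmetric form
\[
\frac{\cos\phi-\sin\phi}{\pi/4-\phi}>2^{1-1/p}\bigl(\cos^{p}\phi+\sin^{p}\phi\bigr)^{1/p},\qquad\phi\in\Bigl(0,\tfrac{\pi}{4}\Bigr),
\]
in which the zero-endpoints $\phi=0$ ($t=1$) and $\phi=\pi/4$ ($t=0$) play symmetric roles. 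Clearing denominators, $F'=0$ becomes $g(\phi)=0$ for an explicit $g$, and since $t\mapsto\phi$ is a diffeomorphism it suffices to show that $g$ changes sign exactly once on $(0,\pi/4)$ --- which I would attempt either by proving $g$ monotone (through $g'$, or through the monotone form of L'Hospital's rule applied to a suitable ratio) or, if no one-line argument works, by splitting $(0,\pi/4)$ into a few subintervals and bounding $g$ between elementary functions of known sign on each.

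I expect the real obstacle to be precisely this single sign change of $F'$. The exponent $p_{1}-1=\log_{\pi/2}2-1\approx0.53$ is irrational, so the usual device of expanding in a Maclaurin series and reading off coefficient signs does not apply cleanly, and the combination of $\arctan$ (equivalently $1/(\pi/4-\phi)$) with a non-integer power resists elementary monotonicity; most likely one must differentiate once more and control $F''$, or else carry out the interval splitting with explicit numerical estimates. Once the single-hump shape of $F$ on $[0,1]$ is established, the constants follow at once: $\beta_{1}=1$ cannot be increased since $F(t)\to0$ as $t\to0^{+}$ (equivalently as $t\to1^{-}$), so $\inf_{a\neq b}T(a,b)/A_{p_{1}}(a,b)=1$; and $\beta_{2}=e^{F(t_{0})}$ is the attained maximum, with $t_{0}$ the root of $F'(t_{0})=0$ and $\beta_{2}=1.0136\ldots$ numerically, giving both $T<\beta_{2}A_{p_{1}}$ and its sharpness. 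Together with (i), this proves Theorem~\ref{Theorem 2}.
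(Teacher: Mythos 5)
Your overall strategy---reduce to a one-variable log-ratio, obtain necessity from the endpoint $b\to 0$, and, at $p_{1}=\log_{\pi/2}2$, use that the log-ratio vanishes at both endpoints and is unimodal, so that it is positive inside and its interior maximum yields $\beta_{2}$---is exactly the paper's strategy (the paper uses the variable $x=b/a$ instead of your $t=(a-b)/(a+b)$, an immaterial change). But the decisive step, which you yourself flag as ``the real obstacle,'' is left unproved: you never establish that $F'$ changes sign exactly once on $(0,1)$. A plan to ``prove $g$ monotone, or split into subintervals with numerical bounds'' is not a proof, and with the non-integer exponent $p_{1}-1$ none of the suggested devices is routine. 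Everything else in your write-up (necessity, the endpoint values, sharpness of $\beta_{1}$ and $\beta_{2}$, the value $1.0136\ldots$) is conditional on this unimodality, so the argument is incomplete precisely at its central point.

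For comparison, the paper closes this gap by an argument valid for every $p\in(1,5/3)$, not just $p_{1}$: writing $F_{p}'$ in the form (\ref{2.3}) with the factor $f_{1}$ of (\ref{2.4}), it differentiates twice more, reaching the function $f_{3}$ of (\ref{2.7}); Lemma \ref{Lemma 2.3} proves $f_{3}$ increasing by replacing $x^{p-1}$ with its tangent-line majorant $(p-1)x+(2-p)$ and using concavity of the resulting quadratic minorant $f_{4}$, and Lemma \ref{Lemma 2.4} then propagates the sign information back up: $f_{3}$ has a single sign change from $-$ to $+$, so $f_{2}$ is decreasing then increasing with $f_{2}(1)=0$ and $f_{2}(0^{+})>0$, hence has a single sign change from $+$ to $-$; so $f_{1}$ is decreasing then increasing with $f_{1}(1)=0$ and $f_{1}(0^{+})=1-\pi/4>0$, hence has a single sign change; therefore $F_{p}$ is increasing on $(0,x_{3})$ and decreasing on $(x_{3},1)$. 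To complete your route you would need to supply an argument of this kind (or its equivalent in your $\phi$-variable); the trigonometric substitution by itself does not remove the difficulty, since the same mixture of $1/(\pi/4-\phi)$ with the irrational power $p_{1}$ reappears in your function $g$.
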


\section{Lemmas}

In order to prove our main results, we need the following lemmas.

\begin{lemma}
\label{Lemma 2.1}Let $F_{p}$ be the function defined on $\left( 0,1\right) $
by%
\begin{equation}
F_{p}\left( x\right) =\ln \frac{T\left( 1,x\right) }{A_{p}\left( 1,x\right) }%
=\ln \frac{1-x}{2\arctan \frac{1-x}{x+1}}-\frac{1}{p}\ln \left( \frac{x^{p}+1%
}{2}\right) .  \label{f}
\end{equation}%
Then we have 
\begin{eqnarray}
\lim_{x\rightarrow 1^{-}}\frac{F_{p}\left( x\right) }{\left( x-1\right) ^{2}}
&=&-\frac{1}{24}\left( 3p-5\right) ,  \label{2.1} \\
F_{p}\left( 0^{+}\right)  &=&\lim_{x\rightarrow 0^{+}}F_{p}\left( x\right)
=\left\{ 
\begin{array}{lc}
\frac{1}{p}\ln 2-\ln \frac{\pi }{2} & \text{if }p>0, \\ 
\infty  & \text{if }p\leq 0,%
\end{array}%
\right.   \label{2.2}
\end{eqnarray}%
where $F_{0}\left( x\right) :=\lim_{p\rightarrow 0}F_{p}\left( x\right) $.
\end{lemma}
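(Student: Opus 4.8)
The plan is to establish the two limits by reducing $F_{p}$ to pieces that are manifestly smooth (for \eqref{2.1}, so that Taylor expansion applies) or that have obvious one-sided limits (for \eqref{2.2}).

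For \eqref{2.1} the only nuisance is that the first summand of $F_{p}$ has the indeterminate form $\ln(0/0)$ at $x=1$. First I would remove it using the identity $\arctan\frac{1-x}{1+x}=\frac{\pi}{4}-\arctan x$, valid for $x\in(0,1)$ since $0<\frac{1-x}{1+x}<1$; this gives $T(1,x)=\dfrac{1-x}{\tfrac{\pi}{2}-2\arctan x}$, a ratio of two real-analytic functions each vanishing to first order at $x=1$, so $T(1,\cdot)$ is real-analytic near $x=1$ with $T(1,1)=1$. Expanding $\arctan x$ about $x=1$ through third order then yields $T(1,x)=1+\tfrac12(x-1)+\tfrac1{12}(x-1)^2+O((x-1)^3)$ and hence $\ln T(1,x)=\tfrac12(x-1)-\tfrac1{24}(x-1)^2+O((x-1)^3)$. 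Writing $h(x)=\tfrac1p\ln\tfrac{1+x^{p}}{2}$, a direct differentiation gives $h(1)=0$, $h'(1)=\tfrac12$, $h''(1)=\tfrac{p-2}{4}$, so $h(x)=\tfrac12(x-1)+\tfrac{p-2}{8}(x-1)^2+O((x-1)^3)$. The first-order terms cancel — which is exactly why $F_{p}(x)/(x-1)^2$ has a finite limit — and subtracting gives $F_{p}(x)=\bigl(-\tfrac1{24}-\tfrac{p-2}{8}\bigr)(x-1)^2+O((x-1)^3)=-\tfrac{3p-5}{24}(x-1)^2+O((x-1)^3)$, which is \eqref{2.1}. (The same expansion follows from the substitution $u=\frac{1-x}{1+x}$, which turns $T(1,x)$ into $\frac{u}{(1+u)\arctan u}$; I would use whichever is cleaner to typeset.)

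For \eqref{2.2} I would just let $x\to0^{+}$ in the two summands of \eqref{f} separately. In the first summand $\frac{1-x}{1+x}\to1$, so $\arctan\frac{1-x}{1+x}\to\frac{\pi}{4}$ and $\ln\frac{1-x}{2\arctan\frac{1-x}{1+x}}\to\ln\frac{2}{\pi}=-\ln\frac{\pi}{2}$, independently of $p$. For the second summand $-\tfrac1p\ln\tfrac{1+x^{p}}{2}$: when $p>0$ we have $x^{p}\to0$, so it tends to $-\tfrac1p\ln\tfrac12=\tfrac1p\ln2$, giving the finite value $\tfrac1p\ln2-\ln\tfrac{\pi}{2}$; when $p<0$ we have $x^{p}\to+\infty$ and $-\tfrac1p>0$, so it tends to $+\infty$; and when $p=0$, $A_{0}(1,x)=\sqrt{x}\to0^{+}$ makes $-\ln\sqrt{x}\to+\infty$. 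In the latter two cases $F_{p}(0^{+})=\infty$, which is \eqref{2.2}.

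No step here is deep; the proof is bookkeeping. The only place a small idea is needed is the desingularization of $T(1,x)$ at $x=1$ via the arctangent subtraction formula (equivalently, the $u=\frac{1-x}{1+x}$ substitution), after which \eqref{2.1} is a routine second-order Taylor computation and \eqref{2.2} is a routine termwise limit with a case split on the sign of $p$. I would check the third-order remainders only to the extent needed to be certain the coefficient of $(x-1)^{2}$ is exact.
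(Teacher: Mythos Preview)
Your proposal is correct and follows essentially the same approach as the paper: a Taylor (power-series) expansion around $x=1$ for \eqref{2.1} and a direct termwise limit for \eqref{2.2}. The paper's proof is a two-line sketch of exactly this, so your write-up simply supplies the details (including the nice desingularization of $T(1,x)$ via $\arctan\frac{1-x}{1+x}=\frac{\pi}{4}-\arctan x$) that the paper omits.
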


\begin{proof}
Using power series expansion we have 
\begin{equation*}
F_{p}\left( x\right) =\allowbreak -\frac{1}{24}\left( 3p-5\right) \left(
x-1\right) ^{2}+O\left( \left( x-1\right) ^{3}\right) ,
\end{equation*}%
which yields (\ref{2.1}).

Direct limit calculation leads to (\ref{2.2}), which proves the lemma.
\end{proof}

\begin{lemma}
\label{Lemma 2.2}Let $F_{p}$ be the function defined on $\left( 0,1\right) $
by (\ref{f}). Then $F_{p}$ is strictly increasing on $\left( 0,1\right) $ if
and only if $p\geq 5/3$ and decreasing on $\left( 0,1\right) $\ if and only
if $p\leq 1$.
\end{lemma}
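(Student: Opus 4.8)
The plan is to differentiate $F_p$ and reduce the monotonicity question to the sign of a single auxiliary function of $x$ on $(0,1)$. Writing $t = (1-x)/(1+x) \in (0,1)$, note that $\arctan t$ appears in $T(1,x)$, and it is natural to set $u = \arctan t$ so that $t = \tan u$. Then
\[
F_p'(x) = \frac{d}{dx}\Bigl(\ln(1-x) - \ln 2 - \ln\arctan\tfrac{1-x}{1+x}\Bigr) - \frac{1}{p}\,\frac{d}{dx}\ln\frac{x^p+1}{2}.
\]
Each term is elementary: $\frac{d}{dx}\ln(1-x) = -1/(1-x)$; the derivative of $\ln\arctan\frac{1-x}{1+x}$ is $-\frac{1}{(1+x^2)\arctan t}$ after using $\frac{d}{dx}\frac{1-x}{1+x} = -\frac{2}{(1+x)^2}$ and $1+t^2 = \frac{2(1+x^2)}{(1+x)^2}$; and $\frac{1}{p}\frac{d}{dx}\ln\frac{x^p+1}{2} = \frac{x^{p-1}}{x^p+1}$. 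Collecting terms, $F_p'(x)$ has the same sign as a function of the form
\[
G_p(x) := \frac{1}{(1+x^2)\arctan\frac{1-x}{1+x}} - \frac{1}{1-x} - \frac{x^{p-1}}{x^p+1},
\]
(up to rearrangement of signs), and I would aim to show $G_p(x) > 0$ on $(0,1)$ for $p \ge 5/3$ and $G_p(x) < 0$ on $(0,1)$ for $p \le 1$.

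The central difficulty is that $G_p$ is not obviously sign-definite term-by-term, so I expect the real work to be in controlling the $\arctan$ term. The cleanest route is the substitution $x = \frac{1-\tan u}{1+\tan u}$ for $u \in (0,\pi/4)$ — equivalently $t = \tan u$ — which turns $\arctan\frac{1-x}{1+x}$ into $u$ and rationalizes $1+x^2$, $1-x$, and $x^p+1$ into trigonometric polynomials in $\sin u$, $\cos u$. After clearing denominators, the inequality $G_p \gtrless 0$ becomes an inequality of the form $H_p(u) \gtrless 0$ on $(0,\pi/4)$, where $H_p$ involves $u$, $\sin u$, $\cos u$ and the power $(\cdots)^p$. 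I would then study $H_p$ by differentiation: show $H_p(\pi/4^-) = 0$ (since both $T$ and $A_p$ agree with $A$ at $x=1$ in the appropriate normalization) and $H_p(0^+)$ has the right sign, then prove $H_p'$ is sign-definite, possibly after one more differentiation, using the critical-exponent structure of $p$. The endpoint exponents $5/3$ and $1$ are forced exactly by the second-order Taylor coefficient computed in Lemma~\ref{Lemma 2.1}: formula (\ref{2.1}) shows $F_p(x) \sim -\frac{1}{24}(3p-5)(x-1)^2$ near $x=1$, so $F_p'$ changes sign near $1$ unless $p \ge 5/3$ (for the increasing case), and a parallel local expansion near $x=0$ — or the behavior of $F_p(0^+)$ from (\ref{2.2}) together with $F_p(1^-)=0$ — pins down $p \le 1$ for the decreasing case; these give the necessity direction almost immediately.

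For sufficiency, rather than treating every $p \ge 5/3$ separately, I would exploit monotonicity in $p$: for fixed $x \in (0,1)$ the term $-\frac{1}{p}\ln\frac{x^p+1}{2}$ is monotone in $p$ (since $A_p$ is increasing in $p$, so $\ln(T/A_p)$ is decreasing in $p$), hence it suffices to prove $F_p$ strictly increasing at the single endpoint value $p = 5/3$, and strictly decreasing at $p = 1$. At $p=1$ the power mean is just $A = (1+x)/2$, so $F_1(x) = \ln\frac{1-x}{2\arctan\frac{1-x}{1+x}} - \ln\frac{1+x}{2}$, which simplifies dramatically — with $t = (1-x)/(1+x)$ it becomes $\ln\frac{t}{\arctan t}$, manifestly decreasing in $t$ and hence increasing in $x$... wait, $t$ decreases as $x$ increases, so $\ln\frac{t}{\arctan t}$ as a function of $x$ is \emph{increasing} precisely when $\ln\frac{t}{\arctan t}$ decreases in $t$, i.e. when $\frac{t}{\arctan t}$ decreases, which is false; so in fact $F_1$ is decreasing in $x$, as claimed. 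This gives the $p\le 1$ case essentially for free. The remaining substantive step is $p = 5/3$: here I would carry out the trigonometric substitution above with $p = 5/3$, clear the fractional power by setting $x = v^3$ or by an appropriate change making $x^{5/3}$ polynomial, and reduce to showing a genuine polynomial-type inequality in $\sin u, \cos u$ (and $u$) is positive on $(0,\pi/4)$, which can be closed by a Taylor/series estimate since $u < \pi/4 < 1$. I expect this $p=5/3$ computation — keeping enough terms of the series in $1-x$ (or in $u$) to get a clean sign — to be the main obstacle.
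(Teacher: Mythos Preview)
Your overall architecture---differentiate $F_p$, reduce the sign of $F_p'$ to an auxiliary function, use the Taylor expansion at $x=1$ for necessity, and reduce sufficiency to the boundary exponents---matches the paper's. But two of your steps contain genuine gaps.

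\textbf{The reduction in $p$.} You write that since $A_p$ is increasing in $p$, $F_p=\ln(T/A_p)$ is decreasing in $p$, ``hence it suffices to prove $F_p$ strictly increasing at the single endpoint value $p=5/3$.'' Pointwise monotonicity of $F_p(x)$ in $p$ says nothing about monotonicity in $x$: from $F_p(x)\le F_{5/3}(x)$ and $F_{5/3}$ increasing you cannot conclude $F_p$ increasing. What you actually need is monotonicity in $p$ of the \emph{derivative} term. The paper obtains this by writing $F_p'$ as a positive factor times
\[
f_1(x)=\frac{(1-x)(x^p+1)}{(x^2+1)(x^{p-1}+1)}-\arctan\frac{1-x}{x+1},
\]
and observing that the Lehmer mean $p\mapsto (x^p+1)/(x^{p-1}+1)$ is increasing, so $f_1$ is increasing in $p$; thus $f_1>0$ at $p=5/3$ gives $f_1>0$ for all $p\ge 5/3$, and $f_1<0$ at $p=1$ gives $f_1<0$ for all $p\le 1$. (Equivalently, one checks that $\tfrac{d}{dx}\ln A_p(1,x)=x^{p-1}/(x^p+1)$ is decreasing in $p$ on $(0,1)$, so $F_p'-F_{5/3}'\ge 0$ for $p\ge 5/3$.) Your conclusion is salvageable, but not for the reason you give.

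\textbf{Necessity of $p\le 1$.} Your proposal to use $F_p(0^+)$ from (\ref{2.2}) together with $F_p(1^-)=0$ does not pin down $p\le 1$: it only yields $\tfrac{1}{p}\ln 2-\ln\tfrac{\pi}{2}\ge 0$, i.e.\ $p\le\log_{\pi/2}2\approx 1.535$. The sharper constraint $p\le 1$ comes from the behaviour of $F_p'$ (not $F_p$) near $x=0$: in the paper's notation $f_1(0^+)=1-\pi/4>0$ whenever $p>1$, so $F_p$ is increasing near $0$ and cannot be decreasing on all of $(0,1)$.

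\textbf{On the $p=5/3$ computation.} Your trigonometric substitution is workable in principle but is heavier than what is needed. The paper stays algebraic: it differentiates $f_1$ once to get a polynomial-type expression $f_2(x)$, substitutes $p=5/3$, and after multiplying by $3x^{4/3}$ obtains the explicit factorisation
\[
3x^{4/3}f_2(x)=2\bigl(1-x^{1/3}\bigr)^3\bigl(x^{2/3}+1\bigr)\bigl(x^{4/3}+3x+5x^{2/3}+3x^{1/3}+1\bigr)>0,
\]
which shows $f_1$ is decreasing and hence $f_1(x)>f_1(1)=0$. Your substitution $x=v^3$ would lead you to the same factorisation; the series estimate you anticipate is not needed.
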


\begin{proof}
Differentiation yields%
\begin{equation}
F_{p}^{\prime }\left( x\right) =\frac{x^{p-1}+1}{x\left( 1-x\right) \left(
x^{p}+1\right) \arctan \frac{1-x}{x+1}}f_{1}\left( x\right) ,  \label{2.3}
\end{equation}%
where 
\begin{equation}
f_{1}\left( x\right) =\frac{\allowbreak \left( 1-x\right) \left(
x^{p}+1\right) }{\allowbreak \left( x^{2}+1\right) \left( x^{p-1}+1\right) }%
-\arctan \frac{1-x}{x+1}.  \label{2.4}
\end{equation}%
Differentiation again leads to%
\begin{equation}
f_{1}^{\prime }\left( x\right) =-\allowbreak \frac{x\left( 1-x\right) }{%
\left( x^{2}+1\right) ^{2}\left( x^{p-1}+1\right) ^{2}}f_{2}\left( x\right) ,
\label{2.5}
\end{equation}%
where%
\begin{equation}
f_{2}\left( x\right) =\left( \left( 1-p\right) x^{p}+\left( p+1\right)
x^{p-1}-2x^{2p-3}-\left( p+1\right) x^{p-2}+\left( p-1\right)
x^{p-3}+2\right) .  \label{2.6}
\end{equation}

(i) We now prove that $F_{p}$ is strictly increasing on $\left( 0,1\right) $
if and only if $p\geq 5/3$. From (\ref{2.3}) it is seen that $\func{sgn}%
F_{p}^{\prime }\left( x\right) =\func{sgn}$ $f_{1}\left( x\right) $ for $%
x\in \left( 0,1\right) $, so it suffices to prove that $f_{1}\left( x\right)
>0$ for $x\in \left( 0,1\right) $ if and only if $p\geq 5/3$.

\textbf{Necessity}. If $f_{1}\left( x\right) >0$ for $x\in \left( 0,1\right) 
$ then there must be $\lim_{x\rightarrow 1^{-}}\left( 1-x\right)
^{-3}f_{1}\left( x\right) \geq 0$. Application of L'Hospital rule leads to 
\begin{equation*}
\lim_{x\rightarrow 1^{-}}\frac{f_{1}\left( x\right) }{\left( 1-x\right) ^{3}}%
=\lim_{x\rightarrow 1^{-}}\frac{\frac{\allowbreak \left( 1-x\right) \left(
x^{p}+1\right) }{\allowbreak \left( x^{2}+1\right) \left( x^{p-1}+1\right) }%
-\arctan \frac{1-x}{x+1}}{\left( 1-x\right) ^{3}}=\frac{1}{24}\left(
3p-5\right) ,
\end{equation*}%
and so we have $p\geq 5/3$.

\textbf{Sufficiency}. We now prove $f_{1}\left( x\right) >0$ for $x\in
\left( 0,1\right) $ if $p\geq 5/3$. As mentioned previous, the function 
\begin{equation*}
p\mapsto L_{p-1}\left( 1,x\right) =\frac{\allowbreak x^{p}+1}{x^{p-1}+1}
\end{equation*}%
is increasing on $\mathbb{R}$, it is enough to show that $f_{1}\left(
x\right) >0$ for $x\in \left( 0,1\right) $ when $p=5/3$. In this case, we
have 
\begin{equation*}
3x^{4/3}f_{2}\left( x\right) =-2x^{3}+8x^{2}-6x^{5/3}+6x^{4/3}-8x+2.
\end{equation*}%
Factoring yields 
\begin{equation*}
3x^{4/3}f_{2}\left( x\right) =2\left( 1-\sqrt[3]{x}\right) ^{3}\left(
x^{2/3}+1\right) \left( x^{4/3}+3x+5x^{2/3}+3x^{1/3}+1\right) >0.\allowbreak 
\end{equation*}%
It follows from (\ref{2.5}) that $f_{1}^{\prime }\left( x\right) <0$, that
is, the function $f_{1}$ is decreasing on $\left( 0,1\right) $. Hence for $%
x\in \left( 0,1\right) $ we have $f_{1}\left( x\right) >f_{1}\left( 1\right)
=0$, which proves the sufficiency.

(ii) We next prove that $F_{p}$ is strictly decreasing on $\left( 0,1\right) 
$ if and only if $p\leq 1$. Similarly, it suffices to show that $f_{1}\left(
x\right) <0$ for $x\in \left( 0,1\right) $ if and only if $p\leq 1$.

\textbf{Necessity}. If $f_{1}\left( x\right) <0$ for $x\in \left( 0,1\right) 
$ then we have 
\begin{equation*}
\lim_{x\rightarrow 0^{+}}f_{1}\left( x\right) =\left\{ 
\begin{array}{ll}
1-\frac{\pi }{4}>0 & \text{if }p>1, \\ 
\frac{1}{2}-\frac{\pi }{4}<0 & \text{if }p=1, \\ 
-\frac{\pi }{4} & \text{if }p<1%
\end{array}%
\leq 0,\right. 
\end{equation*}%
which yields $p\leq 1$.

\textbf{Sufficiency}. We prove $f_{1}\left( x\right) <0$ for $x\in \left(
0,1\right) $ if $p\leq 1$. Due to the monotonicity of the function $p\mapsto
L_{p-1}\left( 1,x\right) $, it suffices to demonstrate $f_{1}\left( x\right)
<0$ for $x\in \left( 0,1\right) $ when $p=1$. In this case, we have $%
f_{2}\left( x\right) =\allowbreak 4-4x^{-1}<0$, then $f_{1}^{\prime }\left(
x\right) >0$, and then for $x\in \left( 0,1\right) $ we have $f_{1}\left(
x\right) <f_{1}\left( 1\right) =0$, which proves the sufficiency and the
proof of this lemma is finished.
\end{proof}

\begin{lemma}
\label{Lemma 2.3}Let $f_{3}$ be the function defined on $\left( 0,1\right) $
by 
\begin{eqnarray}
f_{3}\left( x\right)  &=&-p\left( p-1\right) x^{3}+\left( p-1\right) \left(
p+1\right) x^{2}-2\left( 2p-3\right) x^{p}  \label{2.7} \\
&&-\left( p+1\right) \left( p-2\right) x+\left( p-1\right) \left( p-3\right) 
\notag
\end{eqnarray}%
Then $f_{3}$ is strictly increasing on $\left( 0,1\right) $ if $p\in \left(
1,5/3\right) $.
\end{lemma}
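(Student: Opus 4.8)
The plan is to prove directly that $f_{3}'(x)>0$ for every $x\in(0,1)$ when $p\in(1,5/3)$, which gives the asserted strict monotonicity. Differentiating (\ref{2.7}),
\begin{equation*}
f_{3}'(x)=-3p(p-1)x^{2}+2(p-1)(p+1)x-(p+1)(p-2)-2p(2p-3)x^{p-1}.
\end{equation*}
The only non-polynomial term is $-2p(2p-3)x^{p-1}$, and its coefficient changes sign at $p=3/2$, so I would treat the ranges $p\in(1,3/2]$ and $p\in(3/2,5/3)$ separately. In each case the idea is to bound $x^{p-1}$ by an affine function so as to minorize $f_{3}'$ on $[0,1]$ by a \emph{concave} quadratic $q$; since a concave function on $[0,1]$ attains its minimum at an endpoint, it then suffices to check $q(0)>0$ and $q(1)>0$.

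For $p\in(1,3/2]$ one has $2p-3\le 0$, hence $-2p(2p-3)x^{p-1}\ge 0$ and therefore
\begin{equation*}
f_{3}'(x)\ge q_{0}(x):=-3p(p-1)x^{2}+2(p-1)(p+1)x-(p+1)(p-2).
\end{equation*}
Since $-3p(p-1)<0$, $q_{0}$ is concave, and $q_{0}(0)=(p+1)(2-p)>0$, $q_{0}(1)=2p(2-p)>0$ for $p\in(1,2)$; thus $q_{0}>0$ on $[0,1]$ and $f_{3}'>0$ on $(0,1)$.

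For $p\in(3/2,5/3)$ the crude estimate $x^{p-1}\le 1$ is too weak near $x=0$ (it would leave the constant term $-5p^{2}+7p+2$, which is negative for $p$ close to $5/3$), so instead I would use that $x\mapsto x^{p-1}$ is concave on $(0,1)$ (as $0<p-1<1$) and hence lies below its tangent line at $x=1$: $x^{p-1}\le (p-1)x+(2-p)$ on $(0,1)$. Because $-2p(2p-3)<0$ on this range, this yields $f_{3}'(x)\ge\hat q(x)$ with
\begin{equation*}
\hat q(x)=-3p(p-1)x^{2}+2(p-1)(-2p^{2}+4p+1)x+(2-p)(-4p^{2}+7p+1),
\end{equation*}
again a concave quadratic, with $\hat q(1)=f_{3}'(1)=2p(5-3p)>0$ (using $p<5/3$) and $\hat q(0)=(2-p)(-4p^{2}+7p+1)>0$ (the positive root of $-4p^{2}+7p+1$ is $(7+\sqrt{65})/8>5/3$); hence $\hat q>0$ on $[0,1]$ and $f_{3}'>0$ on $(0,1)$.

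The step I expect to be the main obstacle is the range $p\in(3/2,5/3)$: one has to notice that $x^{p-1}\le 1$ does not suffice and replace it by the sharper tangent-line bound so that the reduced quadratic stays positive at the left endpoint. Once the right affine majorant of $x^{p-1}$ is identified, the rest is routine polynomial arithmetic together with the elementary remark that a concave parabola on $[0,1]$ is minimized at an endpoint. (Alternatively one could differentiate twice more and track the sign of $f_{3}''$ via that of $(p+1)-3px-p(2p-3)x^{p-2}$, but the affine-minorant route seems shorter and avoids a finer case analysis.)
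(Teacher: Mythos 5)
Your proof is correct and follows essentially the same route as the paper: differentiate, replace $x^{p-1}$ by the affine majorant $(p-1)x+(2-p)$, and conclude by noting that the resulting concave quadratic is positive at both endpoints of $[0,1]$ and hence on the whole interval. The one genuine difference is your case split at $p=3/2$, and it is in fact a repair of a sign issue in the paper's own argument: the paper substitutes $x^{p-1}\le (p-1)x+(2-p)$ into the term $-2p(2p-3)x^{p-1}$ for all $p\in(1,5/3)$ and asserts $f_{3}'\ge f_{4}$, but this substitution minorizes $f_{3}'$ only when the coefficient $-2p(2p-3)$ is nonpositive, i.e.\ when $p\ge 3/2$; for $p\in(1,3/2)$ the inequality actually reverses, since $f_{3}'(x)-f_{4}(x)=-2p(2p-3)\bigl[x^{p-1}-(p-1)x-(2-p)\bigr]\le 0$ there. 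Your observation that for $p\in(1,3/2]$ the term $-2p(2p-3)x^{p-1}$ is simply nonnegative and can be dropped, leaving the concave quadratic $q_{0}$ with $q_{0}(0)=(p+1)(2-p)>0$ and $q_{0}(1)=2p(2-p)>0$, closes this gap cleanly. The computations in your second case also check out and agree with the paper's $f_{4}$: the tangency of the majorant at $x=1$ gives $\hat q(1)=f_{3}'(1)=2p(5-3p)>0$, and $\hat q(0)=(2-p)(-4p^{2}+7p+1)>0$ because the positive root $(7+\sqrt{65})/8\approx 1.883$ of $4p^{2}-7p-1$ lies above $5/3$.
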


\begin{proof}
Differentiation yields$\allowbreak $%
\begin{equation}
f_{3}^{\prime }\left( x\right) =-3p\left( p-1\right) x^{2}+2\left(
p-1\right) \left( p+1\right) x-2p\left( 2p-3\right) x^{p-1}-\left(
p+1\right) \left( p-2\right) .  \label{2.8}
\end{equation}%
Note that $1<p<5/3$, using basic inequality for means%
\begin{equation*}
x^{p-1}\leq \left( p-1\right) x+\left( 2-p\right) \text{ \ }(x>0)
\end{equation*}%
to the last member of the third term in (\ref{2.8}) we have 
\begin{eqnarray*}
f_{3}^{\prime }\left( x\right)  &\geq &-3p\left( p-1\right) x^{2}+2\left(
p-1\right) \left( p+1\right) x \\
&&-2p\left( 2p-3\right) \left( \left( p-1\right) x+\left( 2-p\right) \right)
-\left( p+1\right) \left( p-2\right)  \\
&=&-3p\left( p-1\right) x^{2}\allowbreak -2\left( p-1\right) \left(
2p^{2}-4p-1\right) x+\allowbreak \left( p-2\right) \left( 4p^{2}-7p-1\right) 
\\
&:&=f_{4}\left( x\right) .
\end{eqnarray*}%
Thus, in order to prove $f_{3}^{\prime }\left( x\right) >0$, it needs to
show that $f_{4}\left( x\right) >0$ for $x\in \left( 0,1\right) $.

Since $f_{4}^{\prime \prime }\left( x\right) =-6p\left( p-1\right) <0$ and
for $p\in \left( 1,5/3\right) $ 
\begin{eqnarray*}
f_{4}\left( 0^{+}\right)  &=&\left( p-2\right) \left( p-\tfrac{\sqrt{65}+7}{8%
}\right) \left( p+\tfrac{\sqrt{65}-7}{8}\right) >0, \\
f_{4}\left( 1\right)  &=&6p\left( \frac{5}{3}-p\right) >0,
\end{eqnarray*}%
application of properties of concave functions yields for $x\in \left(
0,1\right) $ 
\begin{equation*}
f_{4}\left( x\right) >\left( 1-x\right) f_{4}\left( 0^{+}\right)
+xf_{4}\left( 1\right) >0,
\end{equation*}%
which completes the proof.
\end{proof}

\begin{lemma}
\label{Lemma 2.4}Let $p\in \left( 1,5/3\right) $ and let the function $%
x\mapsto F_{p}\left( x\right) $ be defined on $\left( 0,1\right) $ by (\ref%
{f}). Then the equation $f_{1}\left( x\right) =0$ has a unique solution $%
x_{3}$ such that $F_{p}$ is increasing on $\left( 0,x_{3}\right) $ and
decreasing on $\left( x_{3},1\right) $, where $f_{1}\left( x\right) $ is
defined by (\ref{2.4}).
\end{lemma}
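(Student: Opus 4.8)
The plan is to unwrap the three nested auxiliary functions $f_{1},f_{2},f_{3}$ from the inside out, carrying along at each level the sign information that is already available. Recall from (\ref{2.3}) that $\func{sgn}F_{p}^{\prime }(x)=\func{sgn}f_{1}(x)$ on $(0,1)$, and from (\ref{2.5}) that $\func{sgn}f_{1}^{\prime }(x)=-\func{sgn}f_{2}(x)$ on $(0,1)$, since the prefactor in (\ref{2.5}) is positive there. Differentiating (\ref{2.6}) once more and multiplying by $x^{4-p}$ yields the identity $f_{3}(x)=x^{4-p}f_{2}^{\prime }(x)$, so that $\func{sgn}f_{2}^{\prime }(x)=\func{sgn}f_{3}(x)$ on $(0,1)$. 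Thus the whole analysis is driven by the monotonicity and boundary behaviour of $f_{3}$.

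First I would invoke Lemma \ref{Lemma 2.3}: for $p\in (1,5/3)$ the function $f_{3}$ is strictly increasing on $(0,1)$. A short computation gives $f_{3}(0^{+})=(p-1)(p-3)<0$ and $f_{3}(1^{-})=10-6p>0$, so $f_{3}$ has a unique zero $x_{2}\in (0,1)$, negative to its left and positive to its right. Hence $f_{2}$ is strictly decreasing on $(0,x_{2})$ and strictly increasing on $(x_{2},1)$. Now $f_{2}(1^{-})=0$, while $f_{2}(0^{+})=+\infty$ because among the exponents $2p-3$, $p-2$, $p-3$ occurring in (\ref{2.6}) the smallest is $p-3$ and its coefficient $p-1$ is positive; therefore $f_{2}<0$ on $(x_{2},1)$, and $f_{2}$, decreasing from $+\infty$ down to $f_{2}(x_{2})<0$, has a unique zero $x_{1}\in (0,x_{2})$ with $f_{2}>0$ on $(0,x_{1})$ and $f_{2}<0$ on $(x_{1},1)$. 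Consequently $f_{1}$ is strictly decreasing on $(0,x_{1})$ and strictly increasing on $(x_{1},1)$.

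Then I would run the same argument one last time for $f_{1}$. We have $f_{1}(1^{-})=0$, and by the limit computed in the necessity part of Lemma \ref{Lemma 2.2}(ii), $f_{1}(0^{+})=1-\pi /4>0$ because $p>1$. Since $f_{1}$ increases on $(x_{1},1)$ up to the value $0$, it is negative there; and since it decreases on $(0,x_{1})$ from $1-\pi /4>0$ down to $f_{1}(x_{1})<0$, it has a unique zero $x_{3}\in (0,x_{1})$, with $f_{1}>0$ on $(0,x_{3})$ and $f_{1}<0$ on $(x_{3},1)$. Translating back through (\ref{2.3}), $F_{p}$ is strictly increasing on $(0,x_{3})$ and strictly decreasing on $(x_{3},1)$, and $x_{3}$ is the unique root of $f_{1}(x)=0$, which is exactly the claim.

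The calculations here are all routine: the identity $f_{3}=x^{4-p}f_{2}^{\prime }$, the four boundary limits, and the elementary $\func{sgn}$ bookkeeping. The one step that genuinely needs care is the claim $f_{2}(0^{+})=+\infty$, together with $f_{2}(x_{2})<0$: it is precisely this that forces $f_{2}$ to cross zero \emph{before} it reaches its minimum at $x_{2}$, which in turn produces the single sign change of $f_{1}$ and hence the ``hump'' shape of $F_{p}$. Were $f_{2}$ to stay nonnegative near $0$, the conclusion would fail, so verifying that $p-3$ is the dominant (most negative) exponent in (\ref{2.6}) for every $p\in (1,5/3)$ and that $p-1>0$ is the real content of the argument; everything else is nested sign-tracking.
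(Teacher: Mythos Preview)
Your argument is correct and follows essentially the same route as the paper: invoke Lemma~\ref{Lemma 2.3} for $f_{3}$, then cascade the sign analysis through $f_{2}$, $f_{1}$, and finally $F_{p}^{\prime}$ via (\ref{2.3}), (\ref{2.5}), and the identity $f_{3}=x^{4-p}f_{2}^{\prime}$. The only cosmetic differences are that you have swapped the labels $x_{1}$ and $x_{2}$ relative to the paper, and that you justify $f_{2}(0^{+})=+\infty$ by an explicit dominant-exponent check, whereas the paper simply records $\func{sgn}f_{2}(0^{+})=\func{sgn}(p-1)>0$.
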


\begin{proof}
Differentiating $f_{2}\left( x\right) $ defined by (\ref{2.6}) gives%
\begin{equation}
x^{4-p}f_{2}^{\prime }\left( x\right) =f_{3}\left( x\right) ,  \label{2.9}
\end{equation}%
where $\allowbreak f_{3}\left( x\right) $ is defined by (\ref{2.7}).

Because that $f_{3}$ is strictly increasing on $\left( 0,1\right) $ if $p\in
\left( 1,5/3\right) $ by Lemma (\ref{Lemma 2.3}) and note that 
\begin{equation*}
f_{3}\left( 0^{+}\right) =\allowbreak \left( p-1\right) \left( p-3\right) <0%
\text{, \ \ \ }f_{3}\left( 1\right) =2\allowbreak \left( 5-3p\right) >0,
\end{equation*}%
there is a unique $x_{1}\in $ $\left( 0,1\right) $ such that $f_{3}\left(
x\right) <0$ for $x\in \left( 0,x_{1}\right) $ and $f_{3}\left( x\right) >0$
for $x\in \left( x_{1},1\right) $. Then it is seen from (\ref{2.9}) that $%
f_{2}$ is decreasing on $\left( 0,x_{1}\right) $ and increasing on $\left(
x_{1},1\right) $, which yields $f_{2}\left( x\right) <f_{2}\left( 1\right) =0
$ for $x\in \left( x_{1},1\right) $. This together with $\limfunc{sgn}%
f_{2}\left( 0^{+}\right) =\limfunc{sgn}\left( p-1\right) >0$ reveals that
there exits a unique $x_{2}\in \left( 0,x_{1}\right) $ such that $%
f_{2}\left( x\right) >0$ for $x\in \left( 0,x_{2}\right) $ and $f_{2}\left(
x\right) <0$ for $x\in \left( x_{2},1\right) $. It follows from (\ref{2.5})
that $f_{1}$ is decreasing on $\left( 0,x_{2}\right) $ and increasing on $%
\left( x_{2},1\right) $, and therefore $f_{1}\left( x\right) <f_{1}\left(
1\right) =0$ for $x\in \left( x_{2},1\right) $, which in combination with $%
f_{1}\left( 0^{+}\right) =\allowbreak 1-\frac{1}{4}\pi >0$ indicates that
there is a unique $x_{3}\in \left( 0,x_{2}\right) $ such that $f_{1}\left(
x\right) >0$ for $x\in \left( 0,x_{3}\right) $ and $f_{1}\left( x\right) <0$
for $x\in \left( x_{3},1\right) $. By (\ref{2.3}) it is easy to see that the
function $x\mapsto F_{p}\left( x\right) $ is increasing on $\left(
0,x_{3}\right) $ and decreasing on $\left( x_{3},1\right) $, which proves
the lemma.
\end{proof}

\section{Proofs of Main Results}

Based on the lemmas in the above section, we can easily proved our main
results.

\begin{proof}[Proof of Theorem \protect\ref{Theorem 1}]
By symmetry, we assume that $a>b>0$. Then inequality (\ref{m}) is equivalent
to%
\begin{equation}
\ln T\left( 1,x\right) -\ln A_{p}\left( 1,x\right) =F_{p}\left( x\right) <0,
\label{3.1}
\end{equation}%
where $x=b/a\in \left( 0,1\right) $. Now we prove the inequality (\ref{3.1})
holds for all $x\in \left( 0,1\right) $ if and only if $p\geq 5/3$. 

\textbf{Necessity}. If inequality (\ref{3.1}) holds, then by Lemma \ref%
{Lemma 2.1} we have%
\begin{equation*}
\left\{ 
\begin{array}{l}
\lim_{x\rightarrow 1^{-}}\frac{F_{p}\left( x\right) }{\left( x-1\right) ^{2}}%
=-\frac{1}{24}\left( 3p-5\right) \leq 0, \\ 
\lim_{x\rightarrow 0^{+}}F_{p}\left( x\right) =\frac{1}{p}\ln 2-\ln \frac{%
\pi }{2}\leq 0\text{ if }p>0,%
\end{array}%
\right.
\end{equation*}%
which yields $p\geq 5/3$.

\textbf{Sufficiency. }Suppose that $p\geq 5/3$. It follows from Lemma \ref%
{Lemma 2.2} that $F_{p}\left( x\right) <F_{p}\left( 1\right) =0$ for $x\in
\left( 0,1\right) $, which proves the sufficiency.

Using the monotonicity of the function $x\mapsto F_{5/3}\left( x\right) $ on 
$\left( 0,1\right) $, we have 
\begin{equation*}
\ln \left( 2^{8/5}\pi ^{-1}\right) =F_{5/3}\left( 0^{+}\right)
<F_{5/3}\left( x\right) <F_{5/3}\left( 1^{-}\right) =0,
\end{equation*}%
which implies (\ref{ma}).

Thus the proof of Theorem \ref{Theorem 1} is finished.
\end{proof}

\begin{proof}[Proof of Theorem \protect\ref{Theorem 2}]
Clearly, the reverse inequality of (\ref{m}) is equivalent to%
\begin{equation}
\ln T\left( 1,x\right) -\ln A_{p}\left( 1,x\right) =F_{p}\left( x\right) >0,
\label{3.2}
\end{equation}%
where $x=b/a\in \left( 0,1\right) $. Now we show that the inequality (\ref%
{3.2}) holds for all $x\in \left( 0,1\right) $ if and only if $p\leq \log
_{\pi /2}2$.

\textbf{Necessity}. The condition $p\leq \log _{\pi /2}2$ is necessary.
Indeed, if inequality (\ref{3.2}) holds, then we have%
\begin{equation*}
\left\{ 
\begin{array}{l}
\lim_{x\rightarrow 1^{-}}\frac{F_{p}\left( x\right) }{\left( x-1\right) ^{2}}%
=-\frac{1}{24}\left( 3p-5\right) \geq 0, \\ 
\lim_{x\rightarrow 0^{+}}F_{p}\left( x\right) =\frac{1}{p}\ln 2-\ln \frac{%
\pi }{2}\geq 0\text{ if }p>0%
\end{array}%
\right.
\end{equation*}%
or%
\begin{equation*}
\left\{ 
\begin{array}{l}
\lim_{x\rightarrow 1^{-}}\frac{F_{p}\left( x\right) }{\left( x-1\right) ^{2}}%
=-\frac{1}{24}\left( 3p-5\right) \geq 0, \\ 
\lim_{x\rightarrow 0^{+}}F_{p}\left( x\right) =\infty \text{ if }p\leq 0.%
\end{array}%
\right.
\end{equation*}%
Solving the above inequalities leads to $p\leq \log _{\pi /2}2$.

\textbf{Sufficiency. }The condition $p\leq \log _{\pi /2}2$ is also
sufficient. Since the function $r\mapsto A_{r}\left( 1,x\right) $ is
increasing, so the function $p\mapsto F_{p}\left( x\right) $ is decreasing,
thus it is suffices to show that $F_{p}\left( x\right) >0$ for all $x\in
\left( 0,1\right) $ if $p=p_{1}=\log _{\pi /2}2$.

Lemma \ref{Lemma 2.4} reveals that for $p\in \left( 1,5/3\right) $ there is
a unique $x_{3}$ to satisfy 
\begin{equation}
f_{1}\left( x_{3}\right) =\frac{\allowbreak \left( 1-x_{3}\right) \left(
x_{3}^{p}+1\right) }{\allowbreak \left( x_{3}^{2}+1\right) \left(
x_{3}^{p-1}+1\right) }-\arctan \frac{1-x_{3}}{x_{3}+1}=0  \label{3.3}
\end{equation}%
such that the function $x\mapsto F_{p}\left( x\right) $ is strictly
increasing on $\left( 0,x_{3}\right) $ and strictly decreasing on $\left(
x_{3},1\right) $. It is acquired that for $p_{1}=\log _{\pi /2}2\in \left(
1,5/3\right) $ 
\begin{eqnarray*}
0 &=&F_{p_{1}}\left( 0^{+}\right) <F_{p_{1}}\left( x\right) \leq
F_{p_{1}}\left( x_{3}\right)  \\
0 &=&F_{p_{1}}\left( 1\right) <F_{p_{1}}\left( x_{3}\right) \leq
F_{p_{1}}\left( x_{3}\right) ,
\end{eqnarray*}%
which leads to%
\begin{equation*}
A_{p_{1}}\left( 1,x\right) <T\left( 1,x\right) <\left( \exp F_{p}\left(
x_{3}\right) \right) A_{p_{1}}\left( 1,x\right) .
\end{equation*}%
Solving the equation (\ref{3.3}) for $x_{3}$ by mathematical computation
software we find that $x_{3}\in \left(
0.186930110570624,0.186930110570625\right) $, and then 
\begin{equation*}
\beta _{2}=\exp \left( F_{p_{1}}\left( x_{3}\right) \right) \approx
\allowbreak 1.\,\allowbreak 013\,6,
\end{equation*}%
which proves the sufficiency and inequalities of (\ref{mb}).
\end{proof}

\section{Remarks}

\begin{remark}
From the proof of Lemma \ref{Lemma 2.2}, it is seen that $f_{1}\left(
x\right) >0$ if and only if $p\geq 5/3$, which implies that the inequality 
\begin{equation*}
T\left( 1,x\right) =\frac{x-1}{2\arctan \frac{x-1}{x+1}}>\frac{\left(
x^{2}+1\right) \left( x^{p-1}+1\right) }{2\left( x^{p}+1\right) }
\end{equation*}%
holds if and only $p\geq 5/3$. In a similar way, the inequality 
\begin{equation*}
T\left( 1,x\right) <\frac{\left( x^{2}+1\right) \left( x^{p-1}+1\right) }{%
2\left( x^{p}+1\right) }
\end{equation*}%
is valid if and only if $p\leq 1$. The results can be restated as a
corollary.
\end{remark}

\begin{corollary}
The inequalities%
\begin{equation}
\frac{\left( a^{2}+b^{2}\right) \left( a^{2/3}+b^{2/3}\right) }{2\left(
a^{5/3}+b^{5/3}\right) }<T\left( a,b\right) <\frac{a^{2}+b^{2}}{a+b}
\label{4.0}
\end{equation}%
with the best constants $5/3$ and $1$, and the function 
\begin{equation*}
p\mapsto \frac{\left( a^{2}+b^{2}\right) \left( a^{p-1}+b^{p-1}\right) }{%
2\left( a^{p}+b^{p}\right) }
\end{equation*}%
is decreasing. 

In particular, putting $p=1$, $1/2$, ... ,$\rightarrow -\infty $ and $5/3$, $%
2$, ..., $\rightarrow \infty $ we get 
\begin{eqnarray*}
\frac{a^{2}+b^{2}}{2\max \left( a,b\right) } &<&\cdot \cdot \cdot <\frac{a+b%
}{2}<\frac{\left( a^{2}+b^{2}\right) \left( a^{2/3}+b^{2/3}\right) }{2\left(
a^{5/3}+b^{5/3}\right) }<T\left( a,b\right)  \\
&<&\frac{a^{2}+b^{2}}{a+b}<\frac{a^{2}+b^{2}}{2\sqrt{ab}}<\cdot \cdot \cdot <%
\frac{a^{2}+b^{2}}{2\min \left( a,b\right) }.
\end{eqnarray*}
\end{corollary}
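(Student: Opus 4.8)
The plan is to read the corollary directly off the monotonicity analysis of $f_{1}$ already carried out inside the proof of Lemma \ref{Lemma 2.2}, and then to derive the surrounding chain of inequalities from a single monotonicity fact about Lehmer means; no new analytic work is needed.

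First I would set up the exact dictionary between the sign of $f_{1}$ and the stated two-sided bound. Since $\arctan $ is odd, $T\left( 1,x\right) =\frac{1-x}{2\arctan \frac{1-x}{x+1}}$, and hence from the definition (\ref{2.4}) of $f_{1}$ we get, for $x\in \left( 0,1\right) $,
\begin{equation*}
f_{1}\left( x\right) >0\quad \Longleftrightarrow \quad T\left( 1,x\right) >\frac{\left( x^{2}+1\right) \left( x^{p-1}+1\right) }{2\left( x^{p}+1\right) },
\end{equation*}
with $f_{1}\left( x\right) <0$ equivalent to the reverse inequality. Part (i) of the proof of Lemma \ref{Lemma 2.2} shows $f_{1}>0$ on $\left( 0,1\right) $ if and only if $p\geq 5/3$, and part (ii) shows $f_{1}<0$ on $\left( 0,1\right) $ if and only if $p\leq 1$. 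Putting $p=5/3$ and $p=1$ and homogenizing with $x=b/a$ (we may assume $a>b>0$ by symmetry, using that $T$ is homogeneous of degree one) yields (\ref{4.0}); moreover the ``if and only if'' in Lemma \ref{Lemma 2.2} is precisely the statement that $5/3$ in the left bound of (\ref{4.0}) cannot be decreased and $1$ in the right bound cannot be increased, so these constants are best possible within this family.

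For the monotonicity of $p\mapsto g_{p}\left( a,b\right) :=\dfrac{\left( a^{2}+b^{2}\right) \left( a^{p-1}+b^{p-1}\right) }{2\left( a^{p}+b^{p}\right) }$, I would rewrite $g_{p}\left( a,b\right) =\dfrac{a^{2}+b^{2}}{2}\cdot \dfrac{1}{L_{p-1}\left( a,b\right) }$, where $L_{r}\left( a,b\right) =\frac{a^{r+1}+b^{r+1}}{a^{r}+b^{r}}$ is the Lehmer mean of Section 1. Since $r\mapsto L_{r}\left( a,b\right) $ is strictly increasing on $\mathbb{R}$ and $\left( a^{2}+b^{2}\right) /2>0$ is a constant, $p\mapsto g_{p}\left( a,b\right) $ is strictly decreasing on $\mathbb{R}$. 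Direct evaluation gives $g_{2}\left( a,b\right) =\frac{a+b}{2}$ and $g_{1/2}\left( a,b\right) =\frac{a^{2}+b^{2}}{2\sqrt{ab}}$, while letting $p\to +\infty $ (resp. $p\to -\infty $) and using $L_{r}\to \max \left( a,b\right) $ (resp. $\min \left( a,b\right) $) produces the limiting values $\frac{a^{2}+b^{2}}{2\max \left( a,b\right) }$ and $\frac{a^{2}+b^{2}}{2\min \left( a,b\right) }$. Combining these with the strict monotonicity of $p\mapsto g_{p}$ and with the two central inequalities $g_{5/3}<T<g_{1}$ of (\ref{4.0}) assembles the entire string of inequalities in the statement.

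The only points requiring care are organizational rather than analytic: getting the direction of the equivalence $f_{1}\left( x\right) >0\Leftrightarrow T\left( 1,x\right) >\cdots $ correct, homogenizing accurately from the normalized pair $\left( 1,x\right) $ to $\left( a,b\right) $, and identifying the special values $g_{2}=\frac{a+b}{2}$, $g_{1/2}=\frac{a^{2}+b^{2}}{2\sqrt{ab}}$ together with the two endpoint limits so that the chain closes up consistently. All the substantive estimates---the factorization $3x^{4/3}f_{2}\left( x\right) =2\left( 1-\sqrt[3]{x}\right) ^{3}\left( x^{2/3}+1\right) \left( x^{4/3}+3x+5x^{2/3}+3x^{1/3}+1\right) $ and the L'Hospital computation of $\lim_{x\to 1^{-}}\left( 1-x\right) ^{-3}f_{1}\left( x\right) $---are already in place from Lemma \ref{Lemma 2.2}.
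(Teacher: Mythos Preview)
Your proposal is correct and follows essentially the same approach as the paper: the Remark immediately preceding the corollary derives (\ref{4.0}) from the sign analysis of $f_{1}$ in the proof of Lemma~\ref{Lemma 2.2}, exactly as you do, and the paper's monotonicity claim relies on the same Lehmer-mean fact (quoted in Section~1) that you invoke via $g_{p}=\tfrac{a^{2}+b^{2}}{2L_{p-1}(a,b)}$. Your write-up is in fact more explicit than the paper's, since you spell out the special values $g_{2}$, $g_{1/2}$ and the endpoint limits needed to assemble the full chain, which the paper simply asserts.
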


\begin{remark}
Using the monotonicity of the function defined on $\left( 0,1\right) $ by 
\begin{equation*}
F_{p}\left( x\right) =\ln \frac{T\left( 1,x\right) }{A_{p}\left( 1,x\right) }
\end{equation*}%
given in Lemma \ref{Lemma 2.2}, we can obtain a Fan Ky type inequality but
omit the further details of the proof.

\begin{corollary}
Let $a_{1},a_{2},b_{1},b_{2}>0$ with $a_{1}/b_{1}<a_{2}/b_{2}<1$. Then the
following Fan Ky type inequality%
\begin{equation*}
\frac{T\left( a_{1},b_{1}\right) }{T\left( a_{2},b_{2}\right) }<\frac{%
A_{p}\left( a_{1},b_{1}\right) }{A_{p}\left( a_{2},b_{2}\right) }
\end{equation*}%
holds if $p\geq 5/3$. It is reversed if \thinspace $p\leq 1$.
\end{corollary}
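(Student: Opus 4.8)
The plan is to reduce the claimed Fan Ky type inequality, via the homogeneity and symmetry of the two means, to the monotonicity of $F_p$ established in Lemma \ref{Lemma 2.2}. Both $T$ and $A_p$ are symmetric and positively homogeneous of degree one, so putting $x_i=a_i/b_i$ we may write $T(a_i,b_i)=b_i\,T(1,x_i)$ and $A_p(a_i,b_i)=b_i\,A_p(1,x_i)$ for $i=1,2$. The hypothesis $a_1/b_1<a_2/b_2<1$ together with $a_i,b_i>0$ gives $0<x_1<x_2<1$, so both $x_1$ and $x_2$ lie in the domain $(0,1)$ of $F_p$, and $T(1,x_i)/A_p(1,x_i)=\exp F_p(x_i)$.

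First I would rewrite the desired inequality $T(a_1,b_1)/T(a_2,b_2)<A_p(a_1,b_1)/A_p(a_2,b_2)$ in the equivalent form $T(a_1,b_1)/A_p(a_1,b_1)<T(a_2,b_2)/A_p(a_2,b_2)$; after cancelling the common factors $b_1$ and $b_2$ this becomes $T(1,x_1)/A_p(1,x_1)<T(1,x_2)/A_p(1,x_2)$, i.e. $F_p(x_1)<F_p(x_2)$. Since $0<x_1<x_2<1$, this is an immediate consequence of $F_p$ being strictly increasing on $(0,1)$, which by Lemma \ref{Lemma 2.2} holds whenever $p\ge 5/3$; this proves the first assertion. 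For the reversed inequality I would run the identical reduction: $T(a_1,b_1)/T(a_2,b_2)>A_p(a_1,b_1)/A_p(a_2,b_2)$ is equivalent to $F_p(x_1)>F_p(x_2)$ with $0<x_1<x_2<1$, and this follows from $F_p$ being strictly decreasing on $(0,1)$, which by Lemma \ref{Lemma 2.2} holds whenever $p\le 1$.

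There is essentially no analytic obstacle, since all the real work is contained in Lemma \ref{Lemma 2.2}. The only point that needs a little care is the normalization step: one must invoke the symmetry of $T$ and $A_p$ to bring both $T(a_i,b_i)$ and $A_p(a_i,b_i)$ to arguments of the form $(1,x_i)$ with the \emph{same} $x_i\in(0,1)$, so that the quotient collapses to $\exp F_p(x_i)$ and the factors $b_i$ cancel. One should also record that the strict monotonicity in Lemma \ref{Lemma 2.2}, combined with $x_1\ne x_2$ (forced by $a_1/b_1<a_2/b_2$), is what delivers the strict inequality in the conclusion.
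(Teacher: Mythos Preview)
Your proposal is correct and follows exactly the route the paper indicates: the remark preceding the corollary states that the result comes from the monotonicity of $F_{p}$ in Lemma \ref{Lemma 2.2} and omits the details, and your reduction via homogeneity and symmetry to the comparison $F_{p}(x_{1})\lessgtr F_{p}(x_{2})$ is precisely that omitted routine step.
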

\end{remark}

\begin{remark}
As sharp upper bounds for the second Seiffert mean, we have the following
relations: 
\begin{equation}
T<\frac{2Q+A}{3}<A_{5/3}<L_{1/3}.  \label{4.2}
\end{equation}%
In fact, it has been shown in \cite[Conclusion 1]{Yang.10(3).2007} that the
function $r\mapsto A_{r}$ is strictly log-concave on $[0,\infty )$, and
therefore 
\begin{equation*}
A_{5/3}^{3/4}A_{1/3}^{1/4}<A_{\frac{3}{4}\frac{5}{3}+\frac{1}{4}\frac{1}{3}%
}=A_{4/3},
\end{equation*}%
which is equivalent with the third inequality in (\ref{4.2}). Now we prove
the second one. Assume that $a>b>0$ and set $\left( a/b\right) ^{1/3}=x\in
\left( 0,1\right) $. Then the inequality in question is equivalent to 
\begin{equation*}
D\left( x\right) :=\ln \frac{2\sqrt{\frac{x^{6}+1}{2}}+\frac{x^{3}+1}{2}}{3}%
-\ln \left( \frac{x^{5}+1}{2}\right) ^{3/5}<0.
\end{equation*}%
Differentiating $D\left( x\right) $ yields%
\begin{equation*}
D^{\prime }\left( x\right) =\frac{3x^{2}\left( 1-x\right) }{\left(
\allowbreak x^{5}+1\right) \left( x^{3}\sqrt{\frac{1}{2}x^{6}+\frac{1}{2}}+%
\sqrt{\frac{1}{2}x^{6}+\frac{1}{2}}+2x^{6}+2\right) }D_{1}\left( x\right) ,
\end{equation*}%
where 
\begin{eqnarray*}
D_{1}\left( x\right)  &=&\left( 1+x\right) \sqrt{\frac{1}{2}x^{6}+\frac{1}{2}%
}-2x^{2}=\frac{\left( \left( 1+x\right) \sqrt{\frac{1}{2}x^{6}+\frac{1}{2}}%
\right) ^{2}-\left( 2x^{2}\right) ^{2}}{\left( 1+x\right) \sqrt{\frac{1}{2}%
x^{6}+\frac{1}{2}}+2x^{2}} \\
&=&\frac{\left( x-1\right) ^{2}\left(
x^{6}+4x^{5}+8x^{4}+12x^{3}+8x^{2}+4x+1\right) }{2\left( 1+x\right) \sqrt{%
\frac{1}{2}x^{6}+\frac{1}{2}}+2x^{2}}>0.
\end{eqnarray*}%
Hence, $D^{\prime }\left( x\right) >0$ for $x\in \left( 0,1\right) $, then $%
D\left( x\right) <D\left( 1\right) =0$.$\allowbreak $
\end{remark}

\begin{remark}
By Theorem \ref{Theorem 1} and \ref{Theorem 2}, the inequalities (\ref%
{Seiffert}) and (\ref{C-S}) can be improved as 
\begin{equation}
N<A_{3/2}<A_{\log _{\pi /2}2}<T<A_{5/3}<A_{2}.  \label{4.1}
\end{equation}%
In our forthcoming paper, we shall establish the sharp bounds for the
Neuman-S\'{a}ndor mean in terms of power means as follows:%
\begin{equation}
A_{p_{0}}<N<A_{4/3},  \label{Y1}
\end{equation}%
where $p_{0}=\frac{\ln 2}{\ln \ln \left( 3+2\sqrt{2}\right) }=\allowbreak
1.\,\allowbreak 222\,8...$.

Thus the chain of inequalities for bivariate means given in \cite[(1)]%
{Costin.IJMMS.2012.inprint} can be refined as a more nice one:%
\begin{equation}
A_{0}<L<A_{1/3}<P<A_{2/3}<I<A_{3/3}<N<A_{4/3}<T<A_{5/3},  \label{Y2}
\end{equation}%
where $L,P,I,N,T$ are the logarithmic mean, the first Seiffert mean,
identric mean, Neuman-S\'{a}ndor mean, the second Seiffert mean,
respectively.
\end{remark}

\end{document}